\def\ds{\displaystyle}
\def\n{\noindent}
\def\E{\mathbb{E}}
\def\v{\varphi}
\def\e{\varepsilon}
\def\O{\Omega}
\def\e{\epsilon}
\def\d{\delta}
\def\P{\mathcal{P}}
\def\X{\mathcal{X}}
\def\dx{d_{\X}}
\newtheorem{theorem}{Theorem}[section]
\theoremstyle{definition}
\theoremstyle{remark}
\numberwithin{equation}{section}
\begin{document}

\title{Intrinsic Isometric Embeddings of Pro-Euclidean Spaces}

%    Information for first author
\author{B. Minemyer}
%    Address of record for the research reported here
\address{Division of Mathematics, Alfred University, Alfred, New York 14802}
%    Current address
%\curraddr{Department of Mathematical Sciences,
%Binghamton University, Binghamton, New York 13902}
\email{minemyer@alfred.edu}
%    \thanks will become a 1st page footnote.
%\thanks{I was supported by the Mathematical Sciences Department at Binghamton University}

%    Information for second author
%\author{Author Two}
%\address{Mathematical Research Section, School of Mathematical Sciences,
%Australian National University, Canberra ACT 2601, Australia}
%\email{two@maths.univ.edu.au}

%    General info
%\subjclass{Primary 54C40, 14E20; Secondary 46E25, 20C20}

\date{November 27, 2013.}

%\dedicatory{This paper is dedicated to our authors.}

\keywords{Differential geometry, Discrete geometry, Metric Geometry, Euclidean polyhedra, Polyhedral Space, intrinsic isometry, isometric embedding}

\begin{abstract}

In \cite{Petrunin1} Petrunin proves that a metric space $\X$ admits an intrinsic isometry into $\mathbb{E}^n$ if and only if $\X$ is a pro-Euclidean space of rank at most $n$.  He then shows that either case implies that $\X$ has covering dimension $\leq \, n$.  In this paper we extend this result to include embeddings.  Namely, we first prove that any pro-Euclidean space of rank at most $n$ admits an intrinsic isometric embedding into $\mathbb{E}^{2n+1}$.  We then discuss how Petrunin's result implies a partial converse to this result.

\end{abstract}

\maketitle

%\section*{This is an unnumbered first-level section head}
%This is an example of an unnumbered first-level heading.

%\specialsection*{Introduction}
%This is an example of a special section head.

%beginning of section 1: Introduction
\section{Introduction}\label{Introduction}

In \cite{Petrunin1} Petrunin proves the following Theorem:

\begin{theorem}[Petrunin]\label{Petrunin}

A compact metric space $\X$ admits an intrinsic\footnote{For the definition of an intrinsic isometry, please see \cite{Petrunin1}} isometry into $\mathbb{E}^n$ if and only if $\X$ is a pro-Euclidean space of rank at most $n$.  Either of these statements implies that dim($\X$) $\leq$ $n$ where \text{dim}($\X$) denotes the covering dimension of $\X$.

\end{theorem}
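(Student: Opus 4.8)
The plan is to prove the equivalence by establishing each implication separately and then to read off the dimension bound from the inverse-system description of a pro-Euclidean space. Throughout I would use that an intrinsic isometry $f \colon \X \to \En$ is exactly a short map for which the pullback length pseudometric
$d_{\pull}^{f}(x,y) = \inf \sum_i |f(x_{i-1})\, f(x_i)|$, the infimum running over finite chains $x = x_0, \dots, x_N = y$, recovers $\dx$. Since a short map can only decrease this pseudometric ($d_{\pull}^{f} \le \dx$ because $\dx$ is a length metric), in both directions the real content is the reverse inequality $d_{\pull}^{f} \ge \dx$.

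For the implication that a pro-Euclidean space of rank at most $n$ admits an intrinsic isometry into $\En$, I would write $\X$ as an inverse limit $\varprojlim P_k$ of compact Euclidean polyhedral spaces $P_k$ of dimension $\le n$ with short bonding maps, so that the canonical projections $\pi_k \colon \X \to P_k$ are short and $\dx(x,y) = \sup_k d_{P_k}(\pi_k x, \pi_k y)$. The first step is a purely polyhedral lemma: every compact $n$-dimensional Euclidean polyhedral space admits an intrinsic isometry $g_k \colon P_k \to \En$, built by placing each simplex (each of dimension $\le n$ sits isometrically in $\En$) and folding along shared faces so that no length is created. Composing gives short maps $f_k = g_k \circ \pi_k \colon \X \to \En$; after a basepoint normalization their images stay in a fixed ball, so Arzel\`a--Ascoli yields a uniformly convergent subsequence $f_k \to f$. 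The crux is then to show $f$ is an intrinsic isometry, i.e.\ that length is not lost in the limit: for a rectifiable curve $\gamma$ one must bound $\operatorname{length}(f \circ \gamma)$ below by its $\X$-length, which forces the realizations $g_k$ to be chosen so that the maps on successive $P_k$ are $C^0$-close with length errors decaying geometrically, a diagonal/uniform-estimate argument.

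For the converse, starting from an intrinsic isometry $f \colon \X \to \En$, I would manufacture the approximating polyhedra from finer and finer triangulations. Fix a nested sequence of triangulations $T_k$, of mesh tending to zero, of a large ball of $\En$ containing the compact set $f(\X)$. Each $T_k$ pulls back along $f$ to a decomposition of $\X$; collapsing $\X$ by the relation that identifies points indistinguishable at the $k$-th scale yields a compact space $P_k$ carrying a Euclidean polyhedral metric modelled on the simplices of $T_k$, hence of dimension $\le n$, together with a short projection $\pi_k \colon \X \to P_k$. Refinement provides short bonding maps $P_{k+1} \to P_k$, and because $f$ is an intrinsic isometry one checks $d_{P_k}(\pi_k x, \pi_k y) \nearrow \dx(x,y)$, exhibiting $\X$ as $\varprojlim P_k$ of rank $\le n$. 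The bulk of the work here is the bookkeeping: verifying that each $P_k$ is honestly a Euclidean polyhedral space and that the projections assemble into a compatible inverse system.

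Finally, the dimension estimate follows formally: a pro-Euclidean space of rank $\le n$ is by construction the inverse limit of compact metric spaces of covering dimension $\le n$, and the covering dimension of such an inverse limit is again $\le n$ by a standard fact in dimension theory. I expect the genuine obstacle to be the forward existence argument of the second paragraph, namely guaranteeing that the limit of the $f_k$ remains an intrinsic isometry. Unlike distances, length is only lower semicontinuous under uniform convergence, so the folded realizations $g_k$ must be engineered with quantitative length control, and making this control compatible with the inverse system is the heart of the matter.
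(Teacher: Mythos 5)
A preliminary point: the paper never proves this statement. It is quoted as Petrunin's theorem and attributed to \cite{Petrunin1}; the only fragment of its proof that surfaces here is the forward-direction limit scheme, which the proof of Theorem \ref{Main Theorem} explicitly defers back to \cite{Petrunin1}. So your proposal can only be compared with Petrunin's original argument. At the level of architecture you do reproduce it: intrinsic isometries of polyhedra into $\En$ plus a limiting argument for one direction, pullback polyhedra along finer and finer triangulations of $\En$ for the converse (this is literally Petrunin's ``pull-back'' construction), and the standard fact that an inverse limit of compact spaces of dimension at most $n$ has dimension at most $n$.

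The gaps, however, are genuine and not bookkeeping. First, your definition of intrinsic isometry is wrong: taking the infimum over \emph{all} finite chains gives $d_{\pull}^{f}(x,y) \le |f(x)-f(y)| \le \dx(x,y)$, so $d_{\pull}^{f} = \dx$ would force $|f(x)-f(y)| = \dx(x,y)$; your ``intrinsic isometries'' are then exactly the distance-preserving embeddings, and under that reading the theorem is false. A circle with its intrinsic metric is a $1$-dimensional Euclidean polyhedron, hence pro-Euclidean of rank $1$, yet it admits no distance-preserving map into any $\En$ (three equally spaced points force the image of an arc-midpoint to be a Euclidean midpoint, which is then too close to the third point). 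Petrunin's definition restricts to chains whose steps are at most $\e$ \emph{in} $\X$ and lets $\e \to 0$; this is what permits non-injective intrinsic isometries (foldings), and it is precisely why Theorem \ref{Main Theorem} --- the embedding statement --- is a genuine strengthening rather than a tautology. Second, your ``purely polyhedral lemma'' is Akopyan's theorem, the pl analogue of Nash--Kuiper (cited here as \cite{Akopyan} and refined in Theorem \ref{Minemyer}); it cannot be obtained by ``placing each simplex and folding along shared faces.'' Already the boundary of a $3$-simplex admits no map to $\E^2$ that is isometric on each of its four faces: around a vertex the three incident $60^{\circ}$ sectors develop, under any choice of which edges are folds, to a nontrivial rotation or reflection, so nothing closes up; one must subdivide and crumple \emph{within} faces, which is the hard content of Zalgaller--Akopyan. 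Moreover your scheme needs the approximation form of that theorem (every short map is $C^{0}$-close to a pl intrinsic isometry), not mere existence, because of the next point.

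Third, and most concretely, your Arzel\`a--Ascoli step fails. A uniform limit of intrinsic isometries need not be an intrinsic isometry: take $P_k=[0,1]$ for all $k$ with identity bonding maps (so $\X=[0,1]$), and let $g_k$ be the accordion folding of $[0,1]$ onto $[0,1/k]$ with $k$ equal pleats. Each $g_k$ is an intrinsic isometry into $\mathbb{E}^1$, yet $g_k$ converges uniformly to a constant map, which collapses the intrinsic metric entirely. So choosing the $g_k$ independently and extracting a convergent subsequence cannot work: the folding scales must be coupled to the approximation errors. That coupling is exactly Petrunin's recursion, in which $f_{k+1}$ is chosen, via the approximation form of Akopyan's theorem, to be $\e_{k+1}$-close to $f_k \circ \v_{k+1,k}$ with $\e_{k+1}$ chosen \emph{after} $f_k$ and small relative to the scales already present in it. This recursion --- the step you flag as ``the heart of the matter'' but leave open --- is the actual content of the forward direction, and it is the part of \cite{Petrunin1} that the present paper reuses verbatim in proving Theorem \ref{Main Theorem}.
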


A metric space $\X$ is called a \emph{pro-Euclidean space of rank at most $n$} if it can be represented as an inverse limit of a sequence of $n$-dimensional Euclidean polyhedra $\{ \P_i \}$.  In this definition, the inverse limit is taken in the category with objects metric spaces and morphisms short\footnote{1-Lipschitz} maps.  So the limit $\ds{\lim_{\longleftarrow}\P_i = \X}$ means that the sequence $\{ \P_i \}$ converges to $\X$ in both the topological and metric sense.  The morphisms being short maps means that \emph{all} maps involved in the inverse system are short, including the projection maps.

In \cite{Minemyer2} the following Theorem is proved:

\begin{theorem}[M]\label{Minemyer}

Let $\P$ be an $n$-dimensional Euclidean polyhedron, let $f:\P \rightarrow \E^N$ be a short map, and let $\e>0$ be arbitrary.  Then there exists an intrinsic isometric embedding $h: \P \rightarrow \mathbb{E}^N$ which is an $\e$-approximation of $f$, meaning $ |f(x) - h(x)| < \e $ for all $x \in \X$, provided $N \geq 2n + 1$.

\end{theorem}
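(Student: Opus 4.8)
The plan is to combine a finite folding (corrugation) construction with a Whitney-type general position argument, the latter being exactly where the hypothesis $N \geq 2n+1$ enters. First I would replace $f$ by a piecewise-linear short map $f_1$ that is \emph{injective} and $\e/2$-close to $f$. To do this, fix a triangulation of $\P$ by Euclidean simplices realizing its length metric, approximate $f$ by a simplicial map that is affine on each simplex, and then apply general position: a generic small perturbation of a simplicial map of an $n$-dimensional complex into $\E^N$ is an embedding precisely when $N \geq 2n+1$, since this is the dimension at which the images of two disjoint $n$-simplices generically fail to meet. After a final rescaling by $(1-\eta)$ for small $\eta>0$ I may assume $f_1$ is \emph{strictly} short, and the slack so created is what will make room for the folding. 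Since $\P$ is compact and $f_1$ is an embedding, $f_1$ has a positive injectivity margin $\rho>0$: points of $\P$ whose images lie within $\rho$ of one another must themselves be close.

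The core of the argument is a local building-block lemma, which I expect to be the main obstacle. It states: given a Euclidean simplex $\s$, a strictly short affine map $g:\s\to\E^N$, and $\d>0$, there is a subdivision of $\s$ and a PL map $\tilde g$, affine on each piece, such that $\tilde g$ restricts to a metric isometry on every piece (and hence preserves the length of every path), $\tilde g = g$ on $\partial\s$, and $|\tilde g - g|<\d$ throughout. Intuitively $\tilde g$ recovers the length that the short map $g$ discarded by folding its image back and forth within a $\d$-neighborhood of $g(\s)$, in the manner of a PL Nash--Kuiper corrugation; strict shortness of $g$ guarantees a positive length deficit to absorb, while insisting that $\tilde g = g$ on $\partial\s$ is precisely what lets the pieces be glued. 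The difficulty is to fold in each edge direction of $\s$ simultaneously, keeping the folds compatible along the faces of $\s$ and controlling their amplitude, so that the result is genuinely a piecewise isometry and not merely a shorter short map.

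Applying this lemma simplex by simplex to $f_1$, on a sufficiently fine subdivision of $\P$, yields a PL map $h$ that is affine-isometric on each small piece and agrees with $f_1$ along shared faces, so the pieces glue to a single globally defined PL map. Because $h$ is a piecewise isometry it preserves the length of every rectifiable path, whence the length metric induced on $h(\P)$ equals $d_{\P}$ and $h$ is an intrinsic isometry. Two estimates then close the argument. First, refining the subdivision makes each simplex of small diameter, hence of small length deficit, so the fold amplitude $\d$ may be taken as small as desired; choosing $\d<\e/2$ keeps $h$ within $\e$ of $f$. Second, choosing $\d<\rho/2$ keeps every fold below the injectivity margin of $f_1$: non-adjacent simplices already have separated images, and adjacent simplices are folded compatibly along their common face, so no new self-intersections are introduced and $h$ remains injective. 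Therefore $h$ is an intrinsic isometric embedding that $\e$-approximates $f$, as required.
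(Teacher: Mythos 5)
Your outline (general position for injectivity at $N \geq 2n+1$, plus a local folding lemma to restore lengths) identifies the right ingredients, but the building-block lemma at the core of your argument is false for simplices of dimension $\geq 2$, and this is exactly where the actual content of the theorem lies. You require a map $\tilde{g}$ that is isometric on every piece of a subdivision of $\sigma$ \emph{and} satisfies $\tilde{g} = g$ on $\partial\sigma$ exactly, with $g$ affine and strictly short. These two demands contradict each other: a map that is isometric on each piece of a subdivision preserves the length of every path in $\sigma$, including paths lying in $\partial\sigma$; but if $e \subset \partial\sigma$ is an edge with endpoints $a$ and $b$, then $\tilde{g}|_e = g|_e$ parametrizes the straight segment from $g(a)$ to $g(b)$, whose length $|g(a) - g(b)|$ is strictly less than $|a - b|$ by strict shortness. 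So no such $\tilde{g}$ exists once $\dim \sigma \geq 2$ (the lemma is fine for $1$-simplices, where $\partial\sigma$ is just two points and a zigzag works --- which is perhaps why it looks plausible). Consequently the simplex-by-simplex gluing never gets started, and the injectivity bookkeeping that follows has nothing to apply to. Note that weakening ``isometric on each piece'' to ``intrinsic isometry on $\sigma$'' does not help: the same edge argument applies, so any corrugation \emph{must} move the boundary values, and compatibility across adjacent simplices becomes the entire problem rather than a remark.

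For comparison: the present paper does not prove this statement at all --- it imports it from the reference [Minemyer2], and the proof there is not a one-shot local construction. It rests on Akopyan's PL analogue of the Nash--Kuiper theorem (every short map of an $n$-dimensional Euclidean polyhedron into $\mathbb{E}^N$, $N \geq n$, is uniformly approximable by PL intrinsic isometries), whose proof, building on results of Zalgaller and Brehm on extending distance-reducing maps, works by induction on skeleta: one first corrects the map to be length-preserving on the $1$-skeleton, then extends over the $2$-cells \emph{relative to boundary data that is already length-preserving}, and so on up to dimension $n$. That ordering is precisely what evades the contradiction above: you may only pin down boundary values that are themselves intrinsically isometric, never strictly short ones. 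The embedding theorem is then obtained by combining this approximation machinery with general position arguments in $\mathbb{E}^{2n+1}$ --- the part of the argument you sketched correctly, and by a wide margin the easier of the two halves.
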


Combining Theorem \ref{Minemyer} with some methods used by Nash in \cite{Nash1} and Petrunin in \cite{Petrunin1} we can prove the following:

\begin{theorem}\label{Main Theorem}

Let $\X$ be a compact pro-Euclidean space of rank at most $n$.  Then $\X$ admits an intrinsic isometric embedding into $\mathbb{E}^{2n+1}$.

\end{theorem}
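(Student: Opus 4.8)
The plan is to build the embedding as a uniform limit of maps pulled back from the finite stages of the inverse system, using Theorem \ref{Minemyer} to realize the metric of each polyhedron. Write the inverse system as $\P_1 \xleftarrow{\v_1} \P_2 \xleftarrow{\v_2} \cdots$ with short bonding maps $\v_i\colon\P_{i+1}\to\P_i$, and let $\pi_i\colon\X\to\P_i$ be the (short) projections, so $\v_i\circ\pi_{i+1}=\pi_i$. Since the inverse limit is taken metrically, the pulled-back distances increase to the distance on $\X$, namely $d_{\P_i}(\pi_i x,\pi_i y)\nearrow d_{\X}(x,y)$ for all $x,y$, by shortness of the $\v_i$. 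The elementary observation driving the iteration is that any intrinsic isometric embedding $g\colon\P\to\mathbb{E}^{N}$ is automatically short in the chordal sense: $|g(a)-g(b)|$ is at most the length of the image of any path from $a$ to $b$, which equals the length of that path, so infimizing gives $|g(a)-g(b)|\le d_{\P}(a,b)$.

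Fix a summable sequence $\e_i>0$. I would construct intrinsic isometric embeddings $g_i\colon\P_i\to\mathbb{E}^{2n+1}$ inductively: take $g_1$ to be any embedding supplied by Theorem \ref{Minemyer}, and given $g_i$, apply Theorem \ref{Minemyer} to the short map $g_i\circ\v_i\colon\P_{i+1}\to\mathbb{E}^{2n+1}$ with tolerance $\e_{i+1}$ to get an intrinsic isometric embedding $g_{i+1}$ with $|g_{i+1}-g_i\circ\v_i|<\e_{i+1}$ on $\P_{i+1}$. Setting $f_i:=g_i\circ\pi_i\colon\X\to\mathbb{E}^{2n+1}$ yields $|f_{i+1}(x)-f_i(x)|=|g_{i+1}(\pi_{i+1}x)-g_i(\v_i\pi_{i+1}x)|<\e_{i+1}$ for every $x$, so $\{f_i\}$ is uniformly Cauchy and converges to a map $f\colon\X\to\mathbb{E}^{2n+1}$. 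Each $f_i$ is a composition of short maps, hence short, and therefore so is the limit $f$.

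It remains to verify that $f$ is an intrinsic isometric embedding. Shortness gives the easy inequality $d_f\le d_{\X}$ between the length metric $d_f$ induced by $f$ and the metric of $\X$, so the content is the reverse inequality $d_f\ge d_{\X}$, i.e.\ that no length is lost in the limit. Here I would use that $f_i$ realizes the level-$i$ metric exactly, $\mathrm{length}(f_i\circ\gamma)=\mathrm{length}(\pi_i\circ\gamma)$ for every curve $\gamma$ (since $g_i$ is an intrinsic isometry), and that $\mathrm{length}(\pi_i\circ\gamma)\nearrow\mathrm{length}(\gamma)$ by the metric convergence of the inverse limit. The main obstacle is that uniform convergence only yields lower semicontinuity of length, which bounds $d_f$ from above rather than below; to secure the lower bound one must, following Nash \cite{Nash1} and Petrunin \cite{Petrunin1}, choose the tolerances $\e_i$ decaying fast enough relative to the geometric data of $g_i$ that the length built at stage $i$ survives all later perturbations up to an error tending to $0$. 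This quantitative bookkeeping is the heart of the argument.

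Finally, $f$ must be shown injective, and this is delicate precisely because no finite-stage map $f_i$ is injective, as each $\pi_i$ collapses the fine structure of $\X$; injectivity is recovered only in the limit from the fact that the family $\{\pi_i\}$ separates points of the inverse limit. If $x\ne y$ then $\pi_i x\ne\pi_i y$ for all large $i$, whence $f_i(x)\ne f_i(y)$ since each $g_i$ is an embedding, and choosing each $\e_{i+1}$ small relative to the separation $|g_i(\cdot)-g_i(\cdot)|$ already present (possible since $g_i$ is fixed before $\e_{i+1}$ is selected) keeps $\sum_{j>i}\e_j$ below that gap, so separated points stay separated. Compactness of $\X$ then upgrades the continuous injection $f$ to a homeomorphism onto its image, completing the construction of the intrinsic isometric embedding $f\colon\X\to\mathbb{E}^{2n+1}$. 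I expect the interplay of the last two steps—achieving exact length preservation and genuine injectivity simultaneously in the limit, both governed by the single rate $\e_i\to 0$—to be the main difficulty.
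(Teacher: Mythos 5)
Your construction coincides with the paper's: the same inductive application of Theorem \ref{Minemyer} along the inverse system, the same pulled-back maps (your $f_i=g_i\circ\pi_i$ is the paper's $h_i=f_i\circ\psi_i$), and the same deferral to Petrunin \cite{Petrunin1} for choosing the $\e_i$ so that the uniform limit is an intrinsic isometry; the paper omits that bookkeeping as well, so that is not the issue. The genuine gap is in your injectivity step. You propose to choose $\e_{i+1}$ ``small relative to the separation $|g_i(\cdot)-g_i(\cdot)|$ already present,'' as though that separation were a single positive number determined by $g_i$. It is not: the map $f_i=g_i\circ\pi_i$ is not injective on $\X$, and even among pairs with $\pi_i x\neq \pi_i y$ the infimum of $|f_i(x)-f_i(y)|$ is $0$, since $\pi_i x$ and $\pi_i y$ can be distinct yet arbitrarily close in $\P_i$. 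The quantity you need to protect is a per-pair gap, different pairs of points of $\X$ acquire their gaps at arbitrarily late stages, and the infimum of these gaps over all distinct pairs is $0$; consequently ``keep $\sum_{j>i}\e_j$ below that gap'' is not a condition that any single sequence $\{\e_j\}$ can satisfy for all pairs simultaneously.

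The missing idea is a compactness stratification that converts per-pair gaps into uniform ones. The paper sets $\O_i:=\{(x,x')\in\X\times\X \,|\, \dx(x,x')\geq 2^{-i}\}$, which is compact. Because the pulled-back distances increase monotonically to $\dx$, the (open) condition that $\psi_j$ separates a given pair eventually holds for each pair in $\O_i$, so by compactness there is a single stage $i'$ at which $\psi_{i'}$ separates \emph{every} pair in $\O_i$; then $\d_i:=\inf\{|h_{i'}(x)-h_{i'}(x')| \,:\, (x,x')\in\O_i\}$ is strictly positive, being the infimum of a positive continuous function on a compact set. This $\d_i$ is the uniform gap your argument lacks: imposing $\e_j<\frac{1}{4}\min\{\d_i,\e_{j-1}\}$ on the subsequent tolerances forces geometric decay, so the total motion of any image point after stage $i'$ is less than $\d_i/3$, and hence no pair in $\O_i$ can collapse in the limit. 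Since every pair of distinct points lies in some $\O_i$, the limit is injective, and compactness of $\X$ then makes the continuous injection an embedding. (Your closing worry about meeting the isometry and injectivity requirements with one rate is not a real difficulty: both are upper bounds on $\e_i$ in terms of data available before $\e_i$ is chosen, so one simply imposes both at once.)
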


Theorem \ref{Main Theorem} extends half of Theorem \ref{Petrunin} to the case of intrinsic isometric embeddings.  What Theorem \ref{Main Theorem} does \emph{not} prove is that, if $\X$ admits an intrinsic isometric embedding into $\mathbb{E}^{2n+1}$, then $\X$ is a pro-Euclidean space of rank at most $n$.  The (main) reason that Theorem \ref{Main Theorem} does not say this is because it is not true!  If $\X$ is a pro-Euclidean space with rank at most $n$, then dim($\X$) $\leq \, n$.  But there are many metric spaces with covering dimension greater than $n$ that admit intrinsic isometric embeddings into $\mathbb{E}^{2n+1}$ (a simple example is the $2n$-sphere).

A metric space $\X$ is a \emph{pro-Euclidean space of finite rank} if $\X$ can be written as an inverse limit of a sequence of Euclidean polyhedra $\{ \P_i \}$ and if there exists a natural number $N$ such that dim($\P_i$) $\leq \, N$ for all $i$.  Again we require that the inverse limit take place in the category of metric spaces with short maps.  Then what we can say by using Theorems \ref{Petrunin} and \ref{Main Theorem} is the following:

\begin{theorem}\label{Main Theorem 2}

A compact metric space $\X$ admits an intrinsic isometric embedding into $\mathbb{E}^N$ for some $N$ if and only if $\X$ is a pro-Euclidean space of finite rank.

\end{theorem}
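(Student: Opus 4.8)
The plan is to deduce this biconditional directly from Petrunin's Theorem~\ref{Petrunin} and Theorem~\ref{Main Theorem}, since the notion of \emph{finite rank} has been defined precisely so that these two results dovetail. The key observation is that neither theorem pins the rank and the embedding dimension to the same value: Petrunin passes from an isometry into $\E^N$ to rank at most $N$, while Theorem~\ref{Main Theorem} passes from rank at most $n$ to an embedding into $\E^{2n+1}$. Thus a clean ``iff'' statement can only be expected once we stop tracking the exact rank. Relaxing ``rank at most $n$'' to ``finite rank'' (the existence of \emph{some} uniform dimension bound) absorbs this factor-of-two mismatch, and this is exactly what makes the biconditional hold.

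For the forward direction, suppose $\X$ admits an intrinsic isometric embedding into $\E^N$ for some $N$. I would first note that an embedding is in particular a map, so this embedding is also an intrinsic isometry of $\X$ into $\E^N$. Since $\X$ is compact, Petrunin's Theorem~\ref{Petrunin} then yields that $\X$ is a pro-Euclidean space of rank at most $N$; that is, $\X = \lim_{\longleftarrow} \P_i$ with $\dim(\P_i) \leq N$ for all $i$. Taking $N$ itself as the uniform bound exhibits $\X$ as a pro-Euclidean space of finite rank.

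For the reverse direction, suppose $\X$ is a compact pro-Euclidean space of finite rank, say $\X = \lim_{\longleftarrow} \P_i$ with $\dim(\P_i) \leq N$ for every $i$. Then this same inverse system presents $\X$ as a pro-Euclidean space of rank at most $N$, so Theorem~\ref{Main Theorem} applies directly and produces an intrinsic isometric embedding of $\X$ into $\E^{2N+1}$. Setting $M = 2N+1$ gives the desired embedding into $\E^M$, completing this direction.

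I do not expect a genuine analytic obstacle here, as both implications are formal consequences of the cited theorems; the work was already carried out in Theorem~\ref{Main Theorem} and in \cite{Petrunin1}. The only point requiring care is the bookkeeping of dimensions---verifying that the bound $N$ supplied by Petrunin in the forward direction, and the bound consumed by Theorem~\ref{Main Theorem} in the reverse direction, are used consistently with the definition of finite rank. A secondary subtlety is purely definitional: one should confirm that ``$n$-dimensional Euclidean polyhedra'' is read throughout as ``dimension at most $n$,'' so that a uniform bound on $\dim(\P_i)$ is the same data as a common rank. Once this convention is fixed, the argument is immediate.
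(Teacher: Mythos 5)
Your proposal is correct and follows exactly the route the paper intends: Theorem~\ref{Main Theorem 2} is stated there as a formal consequence of Theorems~\ref{Petrunin} and~\ref{Main Theorem}, with the forward direction using that an intrinsic isometric embedding is in particular an intrinsic isometry (so Petrunin gives rank at most $N$) and the reverse direction feeding the uniform bound $N$ into Theorem~\ref{Main Theorem} to get an embedding into $\mathbb{E}^{2N+1}$. Your observation that ``finite rank'' is precisely the relaxation needed to absorb the mismatch between $N$ and $2n+1$ is also the paper's implicit point, so there is nothing to add.
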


\subsection*{Acknowledgements}  This paper combines the results of \cite{Petrunin1} and \cite{Minemyer2}.  Both of these papers make great use of a result due to A. Akopyan.  Akopyan's original proof of his result can be found in \cite{Akopyan}.  An English version of his proof can be found in \cite{Minemyer3}.  During the preparation of this paper the author received tremendous support from both Binghamton University and Alfred University.  This research was partially supported by the NSF grant of Tom Farrell and Pedro Ontaneda, DMS-1103335.

%beginning of section 2: Preliminaries
\section{Proof of Theorem \ref{Main Theorem}}

\begin{proof}[Proof of Theorem \ref{Main Theorem}]
	
	Let $\X$ be a pro-Euclidean space of rank at most $n$ and let $(\P_i, \v_{j, i})$ be the inverse system associated to $\X$ where $\P_i$ is an $n$-dimensional Euclidean polyhedron for all $i$.  For each $i$ let $\psi_i: \X \rightarrow \P_i$ be the projection map.  Remember that every map associated with this system is short.
	
	Given $\e_{i + 1} > 0$ and a pl intrinsic isometric embedding $f_i:\P_i \rightarrow \mathbb{E}^{2n + 1}$, by Theorem \ref{Minemyer} there exists a pl intrinsic isometric embedding $f_{i + 1}: \P_{i + 1} \rightarrow \mathbb{E}^{2n + 1}$ such that

$$ |f_{i + 1}(x) - (f_i \circ \v_{i + 1, i})(x)|_{\mathbb{E}^{2n + 1}} < \e_{i + 1} $$

\n for all $x \in \P_{i + 1}$.

Then define $h_i:= f_i \circ \psi_i$ for all $i$. To keep track of all of the maps involved, please see Figure \ref{tenthfig}.  What needs to be shown is that the values for $\e_i$ can be chosen in such a way that the sequence $\{ h_i \}_{i = 0}^{\infty}$ converges uniformally to an intrinsic isometric embedding.  The fact that the sequence $\{ \e_i \}_{i = 1}^{\infty}$ can be chosen so that the sequence $\{ h_i \}_{i = 0}^{\infty}$ converges uniformally to an intrinsic isometry is identical to the proof by Petrunin in \cite{Petrunin1} and is omitted here.

To see that we can choose the sequence $\{ \e_i \}_{i = 1}^{\infty}$ so that the sequence $\{ h_i \}_{i = 0}^{\infty}$ converges to an embedding just consider the collection of sets

$$ \O_i := \{ (x, x') \in \X \times \X \, | \, \dx (x, x') \geq 2^{-i} \} . $$

Since $\ds{\X = \lim_{\longleftarrow}\P_i}$ and because $\O_i$ is compact, for every $i \in \mathbb{N}$ there exists $i'$ such that $\psi_{i'}(x) \neq \psi_{i'}(x')$ for all $(x, x') \in \O_i$.  For every $i$ choose $i' > (i - 1)'$ which satisfies the above.  Thus $h_{i'}(x) \neq h_{i'}(x')$ for all $(x, x') \in \O_i$.  Then we let 

$$\d_i := \text{inf} \{ |h_{i'}(x) - h_{i'}(x')|_{\mathbb{E}^{2n + 1}} \, | \, (x, x') \in \O_i \} > 0. $$

If we choose $\e_i < \frac{1}{4} \text{min} \{ \d_i , \e_{i-1} \} $ then no point pair in $\O_i$ can come together in the limit.  Eventually any pair of distinct points is contained in some $\O_i$, which completes the proof.

\begin{figure}
\begin{center}
\begin{tikzpicture}

\draw (0,2.5)node{$\X$};
\draw (0,0)node{$\P_i$};
\draw (-2.5,0)node{$\P_{i-1}$};
\draw (2.5,0)node{$\P_{i+1}$};
\draw (0,-2.5)node{$\mathbb{E}^{2n+1}$};

\draw (-0.3,0) -- (-2.1,0);
\draw (-2.1,0) -- (-2,0.1);
\draw (-2.1,0) -- (-2,-0.1);

\draw (0.3,0) -- (2.1,0);
\draw (0.3,0) -- (0.4,0.1);
\draw (0.3,0) -- (0.4,-0.1);

\draw (-4.7,0) -- (-2.9,0);
\draw (-4.7,0) -- (-4.6,0.1);
\draw (-4.7,0) -- (-4.6,-0.1);

\draw (4.7,0) -- (2.9,0);
\draw (2.9,0) -- (3,0.1);
\draw (2.9,0) -- (3,-0.1);

\draw (0,2.2) -- (0,0.3);
\draw (0.1,0.4) -- (0,0.3);
\draw (-0.1,0.4) -- (0,0.3);

\draw (0,-2.2) -- (0,-0.3);
\draw (0.1,-2.1) -- (0,-2.2);
\draw (-0.1,-2.1) -- (0,-2.2);

\draw (2.25,0.25) -- (0.25,2.25);
\draw (2.25,0.25) -- (2.05,0.25);
\draw (2.25,0.25) -- (2.25,0.45);

\draw (-2.25,0.25) -- (-0.25,2.25);
\draw (-2.25,0.25) -- (-2.05,0.25);
\draw (-2.25,0.25) -- (-2.25,0.45);

\draw (2.25,-0.25) -- (0.25,-2.25);
\draw (0.25,-2.25) -- (0.25,-2.05);
\draw (0.25,-2.25) -- (0.45,-2.25);

\draw (-2.25,-0.25) -- (-0.25,-2.25);
\draw (-0.25,-2.25) -- (-0.25,-2.05);
\draw (-0.25,-2.25) -- (-0.45,-2.25);

\draw[fill=black!] (-4.9,0) circle (0.1ex);
\draw[fill=black!] (-5.1,0) circle (0.1ex);
\draw[fill=black!] (-5.3,0) circle (0.1ex);
\draw[fill=black!] (4.9,0) circle (0.1ex);
\draw[fill=black!] (5.1,0) circle (0.1ex);
\draw[fill=black!] (5.3,0) circle (0.1ex);

\draw (0,1.25)node[right]{$\psi_i$};
\draw (1.25,1.25)node[right]{$\psi_{i+1}$};
\draw (-1.25,1.25)node[left]{$\psi_{i-1}$};
\draw (-1.15,0)node[above]{$\v_{i, i-1}$};
\draw (1.15,0)node[above]{$\v_{i+1, i}$};
\draw (0,-1.25)node[right]{$f_i$};
\draw (1.25,-1.25)node[right]{$f_{i+1}$};
\draw (-1.25,-1.25)node[left]{$f_{i-1}$};

\draw (3,-2)node[right]{$h_i := f_i \circ \psi_i$ for all $i$};

\end{tikzpicture}
\end{center}
\caption{Diagram for the proof of Theorem \ref{Main Theorem}.}
\label{tenthfig}
\end{figure}
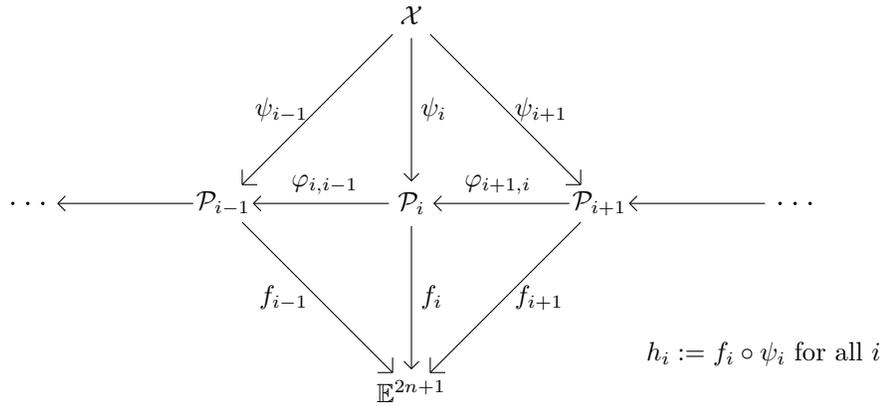

	\end{proof}

\bibliographystyle{amsplain}

\end{document}